 \definecolor{darkblue}{RGB}{0,0,160}
\DeclareSymbolFont{usualmathcal}{OMS}{cmsy}{m}{n}
\DeclareSymbolFontAlphabet{\mathcal}{usualmathcal}
\numberwithin{equation}{section}
\theoremstyle{plain}
\newtheorem{theorem}{Theorem}[section]
\newtheorem{lemma}[theorem]{Lemma}
\newtheorem{corollary}[theorem]{Corollary}
\newtheorem{proposition}[theorem]{Proposition}
\newtheorem{construction}[theorem]{Construction}
\theoremstyle{definition}
\newtheorem{definition}[theorem]{Definition}
\theoremstyle{remark}
\newtheorem{case[theorem]}{Case}
\title[\parbox{14cm}{\centering{An asymmetric bound for sum of distance sets \hspace{1in}}} \quad]{An asymmetric bound for sum of distance sets}
\author{Daewoong Cheong, Doowon Koh and Thang Pham}
\address{Department of Mathematics\\
Chungbuk National University \\
Cheongju, Chungbuk 28644 Korea}
\email{daewoongc@chungbuk.ac.kr}
\address{Department of Mathematics\\
Chungbuk National University \\
Cheongju, Chungbuk 28644 Korea}
\email{koh131@chungbuk.ac.kr}
\address{ETH Zurich, Switzerland}
\email{vanthang.pham@inf.ethz.ch}
\subjclass[2010]{ 52C10, 42B05, 11T23 }
\begin{document}
\begin{abstract}
For $ E\subset \mathbb{F}_q^d$, let $\Delta(E)$ denote the distance set determined by pairs of points in $E$. By using additive energies of sets on a paraboloid, Koh, Pham, Shen, and Vinh (2020) proved that if $E,F\subset \mathbb{F}_q^d $ are subsets with $|E||F|\gg q^{d+\frac{1}{3}}$ then $|\Delta(E)+\Delta(F)|> q/2$. They also proved that the threshold $q^{d+\frac{1}{3}}$ is sharp when $|E|=|F|$. In this paper, we provide an improvement of this result in the unbalanced case, which is essentially sharp in odd dimensions. The most important tool in our proofs is an optimal $L^2$ restriction theorem for the sphere of zero radius.
\end{abstract}
\maketitle
\section{Introduction}
Let $\mathbb{F}_q$ be a finite field of order $q$, where $q$ is an odd prime power. For  $x=(x_1, \ldots, x_d)$ and $(y_1, \ldots, y_d)$ in $E$, define a distance between $x$ and $y$  by
\[||x-y||:=(x_1-y_1)^2+\cdots+(x_d-y_d)^2,\]
which is the square of the Euclidean distance. We denote by $\Delta(E)$ the set of distances determined by pairs of points in $E$, namely,
\[\Delta(E):=\{||x-y||\colon x, y\in E\}.\]
The Erd\H{o}s-Falconer distance problem in $\mathbb{F}_q^d$ asks for the smallest exponent $N$ such that for any $E\subset \mathbb{F}_q^d$ with at least $q^N$ elements, the number of distinct distances determined by pairs of points in $E$ is at least $cq$, for some constant $0<c<1$.

Iosevich and Rudnev \cite{io} showed that if $|E|\ge 4q^{(d+1)/2}$, then $|\Delta(E)|=q$, which means that for any $\lambda\in \mathbb{F}_q$, there exist two points $x, y\in E$ such that $||x-y||=\lambda$.  Hart et al. \cite{hart} proved that the exponent $\frac{d+1}{2}$ is essentially sharp in odd dimensions, even though we wish to cover a positive proportion of all distances. However, in even dimensions, it is conjectured that the right exponent should be $\frac{d}{2}$. We refer the interested reader to \cite{mot, mu} and references therein for most recent progress on this conjecture.

Let $E$ and $F$ be sets in $\mathbb{F}_q^d$, in this paper, we focus on the following analogue: How large do $E$ and $F$ need to be %$%5such that 
to guarantee the inequality 
\[|\Delta(E)+\Delta(F)|\gg q?\]

Here, and throughout this paper, for simplicity, we will use  $C$ to denote  a sufficiently large constant independent of the field size $ q.$ We also use the following notations: $X \ll Y$ means that there exists  some absolute constant $C_1>0$ such that $X \leq C_1Y$, the notation $X \gtrsim Y$ means $X\gg (\log Y)^{-C_2} Y$ for some absolute constant $C_2>0$, and  $X\sim Y$ means $Y\ll X\ll Y$.

One can easily check that $\Delta(E)+\Delta(F)=\Delta(E\times F)$, where $\Delta(E\times F)$ is obviously defined as the distance set determined by the product set $E\times F$ in $\mathbb F_q^{2d}.$ Thus, it follows trivially from Iosevich-Rudnev's result that if $|E||F|\ge4 q^{\frac{2d+1}{2}}$, then $\Delta(E)+\Delta(F)=\mathbb{F}_q$. In a recent paper, by using results on additive energies of sets on a paraboloid, Koh, Pham, Shen, and Vinh \cite{kohmz} showed that the exponent $d+\frac{1}{2}$ can be improved to $d+\frac{1}{3}$ as follows.

\begin{theorem}[\cite{kohmz}]\label{thm1mz}Let $E$ and  $F$ be sets in $\mathbb{F}_q^d$. Suppose either $d=4k-2$ and $q\equiv 3\mod 4$ or $d\ge 3$ is odd. If $|E||F|\ge Cq^{d+\frac{1}{3}}$ for some big positive constant $C$, then
$|\Delta(E)+\Delta(F)|> \frac{q}{2}.$
\end{theorem}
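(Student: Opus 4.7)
The plan is a second-moment calculation. Let
\[
\nu(t) \;:=\; |\{(x,y,u,v)\in E^2\times F^2 : \|x-y\|+\|u-v\|=t\}|.
\]
Then $\sum_t\nu(t)=|E|^2|F|^2$ and $\operatorname{supp}\nu\subseteq \Delta(E)+\Delta(F)$, so Cauchy--Schwarz yields
\[
|\Delta(E)+\Delta(F)| \;\geq\; \frac{(|E|^2|F|^2)^2}{\sum_t \nu(t)^2},
\]
and the conclusion $|\Delta(E)+\Delta(F)|>q/2$ will follow from $\sum_t\nu(t)^2<(2/q)|E|^4|F|^4$. Writing $\nu=D_E\ast D_F$ with $D_E(t):=|\{(x,y)\in E^2:\|x-y\|=t\}|$ and applying Plancherel on $\mathbb F_q$,
\[
\sum_t\nu(t)^2 \;=\; \frac{1}{q}\sum_{\xi\in\mathbb F_q}|\Omega_E(\xi)|^2\,|\Omega_F(\xi)|^2, \qquad \Omega_E(\xi):=\sum_{(x,y)\in E^2}\!\chi(\xi\|x-y\|).
\]
The $\xi=0$ contribution is exactly $q^{-1}|E|^4|F|^4$, so the task reduces to showing $\sum_{\xi\neq 0}|\Omega_E(\xi)|^2|\Omega_F(\xi)|^2<|E|^4|F|^4$.

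\textbf{Gauss-sum lift.} For $\xi\neq 0$, I would substitute the Plancherel expansion $r_E(u):=\mathbf{1}_E\ast \mathbf{1}_{-E}(u)=q^{-d}\sum_m|\widehat E(m)|^2\chi(u\cdot m)$ into $\Omega_E(\xi)=\sum_u r_E(u)\chi(\xi\|u\|)$ and evaluate the resulting one-variable Gauss sum
\[
\sum_{u\in\mathbb F_q^d}\chi(\xi\|u\|+u\cdot m) \;=\; \eta(\xi)^d\,\mathfrak g^d\,\chi\!\left(-\tfrac{\|m\|}{4\xi}\right),
\]
where $\eta$ is the quadratic character and $\mathfrak g$ is the standard Gauss sum with $|\mathfrak g|=\sqrt q$. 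Setting $s=-1/(4\xi)$ and $G_E(t):=\sum_{\|m\|=t}|\widehat E(m)|^2$ this produces $|\Omega_E(\xi)|^2=q^{-d}|\widetilde G_E(s)|^2$, where $\widetilde G_E$ denotes the one-dimensional $\mathbb F_q$-Fourier transform of $G_E$. Hence the task becomes
\[
\sum_{s\neq 0}|\widetilde G_E(s)|^2\,|\widetilde G_F(s)|^2 \;<\; q^{2d}\,|E|^4|F|^4.
\]

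\textbf{Paraboloid energy and main obstacle.} Decoupling $E$ and $F$ via Cauchy--Schwarz,
\[
\sum_{s\neq 0}|\widetilde G_E|^2|\widetilde G_F|^2 \;\leq\; \Big(\sum_s|\widetilde G_E(s)|^4\Big)^{1/2}\Big(\sum_s|\widetilde G_F(s)|^4\Big)^{1/2},
\]
and a direct expansion shows that $\sum_s|\widetilde G_E(s)|^4$ equals $q$ times the number of quadruples $(m_1,m_2,m_3,m_4)\in(\mathbb F_q^d)^4$ satisfying $\|m_1\|+\|m_2\|=\|m_3\|+\|m_4\|$, weighted by $\prod_i|\widehat E(m_i)|^2$. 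This is exactly an additive-energy sum on the paraboloid $\{(m,\|m\|):m\in\mathbb F_q^d\}\subset \mathbb F_q^{d+1}$ weighted by the Fourier mass of $E$. The heart of the proof is the sharp bound for this paraboloid energy: it is precisely where the $1/3$ in the threshold $d+1/3$ appears, and where the hypotheses on $(d,q)$ are used. In odd dimension $d\geq 3$ the factor $\eta^d$ and the sphere sizes behave uniformly, and in $d=4k-2$ with $q\equiv 3\pmod 4$ the Gauss factor $\mathfrak g^d$ is real, both of which rule out parasitic oscillations that would otherwise destroy the estimate. Combining the paraboloid energy bound with the Plancherel identity $\sum_t G_E(t)=q^d|E|$ and its analogue for $F$ yields the threshold $|E||F|\gg q^{d+1/3}$; establishing (or invoking) the sharp paraboloid energy bound is the main --- and essentially only --- technical obstacle.
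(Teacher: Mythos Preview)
Your outline is sound and is essentially the route taken in the cited reference \cite{kohmz}: reduce to the second moment of $\nu$, peel off the $\xi=0$ term, perform the Gauss-sum lift, and then symmetrize by Cauchy--Schwarz to reduce everything to a weighted $L^4$ quantity of the form $\sum_s|\widetilde G_E(s)|^4$, which is a paraboloid-type additive energy. Up to that point your computations are correct (including the normalization in $|\Omega_E(\xi)|^2=q^{-d}|\widetilde G_E(s)|^2$).

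However, this paper does not prove Theorem~\ref{thm1mz} by that argument. It instead proves the strictly stronger asymmetric Theorem~\ref{thm0} and observes (immediately after the statement of Theorem~\ref{thm0}) that it implies Theorem~\ref{thm1mz}. The key difference is that the paper \emph{does not symmetrize} with Cauchy--Schwarz at your last step. Rather, after the same second-moment reduction (Lemma~\ref{goodLem}), it bounds
\[
\sum_{\|M\|_*=0}|\widehat{\mathcal D\times\mathcal D}(M)|^2
\;\le\;\Big(\sum_{M'\in\mathbb F_q^{3d}}|\widehat{E\times F\times E}(M')|^2\Big)\cdot\max_{r\in\mathbb F_q}\sum_{m\in S_r^{d-1}}|\widehat F(m)|^2,
\]
and then invokes the uniform $L^2$ spherical restriction estimate (Proposition~\ref{pro2.4}) for the inner maximum. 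The hypotheses on $(d,q)$ enter precisely in controlling the zero-radius sphere in that proposition, not through an $L^4$/paraboloid energy bound. This asymmetric treatment is what produces the improved threshold $|E|^2|F|\ge Cq^{(3d+1)/2}$ (or the symmetric variant), which then implies your symmetric threshold $|E||F|\ge Cq^{d+1/3}$.

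The only genuine gap in your proposal is that the decisive paraboloid energy bound is named but not proved; you correctly flag it as the sole obstacle, but note that it is nontrivial and is exactly where the exponent $1/3$ and the parity/congruence assumptions on $(d,q)$ are actually used in the \cite{kohmz} argument. If you intend a self-contained proof along your lines, that bound must be supplied; if you are happy to cite it, your sketch is a faithful reconstruction of the original \cite{kohmz} method---just not of this paper's method.
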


They also constructed examples to show that the exponent $d+\frac{1}{3}$ is optimal for the case $|E|=|F|$ in odd dimensions. Using the same approach, they indicated that the Erd\H{o}s-Falconer distance conjecture holds for sets of the form $A^4\subset \mathbb{F}_q^4$, where $q$ is a prime number and $A$ is a multiplicative subgroup in $\mathbb{F}_q^*.$

We note that in the form of $\Delta(E, F)+\Delta(E, F)$, where $\Delta(E, F):=\{||x-y||\colon x\in E, y\in F\}$, it was first proved by  Shparlinski \cite[Corollary 2]{shparlinski} that 
\begin{equation} \label{Shparlinski} |\Delta(E, F)+\Delta(E, F)|\ge \frac{1}{3}\min\left\lbrace q, \frac{|E||F|}{q^{d-1}}, \frac{|E||F|^2}{q^{\frac{3d}{2}}}\right\rbrace,\end{equation}
which is non-trivial when the sizes of sets $E$ and $F$ differ significantly. A  simple  graph theoretic proof of this result and applications can be found in the work of Hegyv\'{a}ri and  P\'{a}lfy in \cite{HP}.

From the Iosevich-Rudnev result \cite{io}, we observe   that if the size of $E$ (resp. $F$) is at least $4q^{\frac{d+1}{2}}$, then $|\Delta(E)|= q$ (resp. $|\Delta(F)|= q$),  %As a consequence,
 and so we have $|\Delta(E)+\Delta(F)|= q$. Hence, in the rest of this paper, without loss of generality, we assume that $|E|, |F|< 4q^{\frac{d+1}{2}}$.

The main purpose of this paper is to give improvements of Theorem \ref{thm1mz} in the unbalanced case, namely, $|E|\ne |F|$. The most important tool in our proofs  is  an optimal $L^2$ restriction theorem for the sphere of zero radius. More precisely, our first result is as follows.

\begin{theorem}\label{thm0}
Let $E$ and $F$ be sets in $\mathbb{F}_q^d$. % and $C$ be a fixed large positive constant. 
Suppose either $d=4k-2$ and $q\equiv 3\mod 4$ or $d\ge 3$ is odd. Then there exists a large positive constant $C$ such that  if either $|E||F|^2\ge Cq^{\frac{3d+1}{2}}$  or $|E|^2|F|\ge Cq^{\frac{3d+1}{2}}$, then we have
\[|\Delta(E)+\Delta(F)|> \frac{q}{2}.\]
\end{theorem}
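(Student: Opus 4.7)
My plan is to follow the same Fourier-analytic skeleton as the proof of Theorem~\ref{thm1mz} in \cite{kohmz}, replacing the paraboloid additive-energy input by the announced optimal $L^2$ restriction theorem for the zero sphere $S_0\subset\mathbb{F}_q^d$. The opening reduction is standard. Using $\Delta(E)+\Delta(F)=\Delta(E\times F)\subset\mathbb{F}_q^{2d}$, set
\[
\nu(t):=\#\{(u,v)\in(E\times F)^2:\|u-v\|=t\}.
\]
Cauchy--Schwarz gives $|\Delta(E)+\Delta(F)|\geq(|E|^2|F|^2)^2/\sum_t\nu(t)^2$, so it is enough to show $\sum_t\nu(t)^2<2|E|^4|F|^4/q$. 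Because $\|(x_1,x_2)-(y_1,y_2)\|=\|x_1-y_1\|+\|x_2-y_2\|$, one has $\nu=N_E\ast N_F$ as functions on $\mathbb{F}_q$ (where $N_G(r):=\#\{(x,y)\in G^2:\|x-y\|=r\}$), and Young's inequality yields $\sum_t\nu(t)^2\leq|E|^4\Lambda_F$ and symmetrically $\leq|F|^4\Lambda_E$, with $\Lambda_G:=\sum_r N_G(r)^2$. I assume WLOG $|E||F|^2\geq Cq^{(3d+1)/2}$; combined with the a priori bound $|E|<4q^{(d+1)/2}$ this forces $|F|^2\geq (C/4)q^d$. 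The problem then reduces to the sphere-additive-energy estimate $\Lambda_F<2|F|^4/q$.

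Plancherel on $\mathbb{F}_q$ gives $\Lambda_F=|F|^4/q+q^{-1}\sum_{s\neq 0}|T_F(s)|^2$, where $T_F(s):=\sum_{x,y\in F}\chi(s\|x-y\|)$. Expanding $T_F(s)$ via the Gauss-sum identity $\widehat{\chi(s\|\cdot\|)}(\xi)=c_d(s)\chi(-\|\xi\|/(4s))$ with $|c_d(s)|=q^{d/2}$, and applying Plancherel once more on $\mathbb{F}_q$, one arrives at the clean identity
\[
\sum_{s\neq 0}|T_F(s)|^2\;=\;q^{1-d}\sum_r\bigl(G_F(r)-q^{d-1}|F|\bigr)^2,\qquad G_F(r):=\sum_{\xi\in S_r}|\widehat{1_F}(\xi)|^2,
\]
so the goal becomes the variance-type bound $\sum_r(G_F(r)-q^{d-1}|F|)^2<q^{d-1}|F|^4$. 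For $r\neq 0$ the individual deviations are controlled by the classical Iosevich--Rudnev estimate on Fourier transforms of non-degenerate spheres. For $r=0$ the optimal $L^2$ restriction theorem for $S_0$ supplies the analogous control: the stated dimensional conditions ($d$ odd, or $d=4k-2$ with $q\equiv 3\pmod 4$) are precisely those under which the Gauss-sum identities $\sum_{s\neq 0}\eta(s)=0$, respectively $\eta(-1)^{d/2}=-1$, force $\widehat{1_{S_0}}$ to vanish, or take a sharp form, on $S_0^\ast$, which is the source of the optimal restriction bound. Combining, one obtains $\sum_r(G_F(r)-q^{d-1}|F|)^2\ll q^{2d-1}|F|^2$, and the hypothesis $|F|^2\gtrsim q^d$ closes the inequality. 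The other case $|E|^2|F|\geq Cq^{(3d+1)/2}$ is handled symmetrically via the bound $\sum_t\nu(t)^2\leq|F|^4\Lambda_E$.

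The hard part is executing the last step, specifically that the optimal $L^2$ restriction for $S_0$ yields a uniform control strong enough to absorb the potentially large contribution of $G_F(0)$: the naive character-sum estimate gives only $G_F(0)\leq q^{d-1}|F|+Cq^{(d-1)/2}|F|^2$, and the second term is fatal in the regime $|F|\geq q^{(d-1)/2}$. The dimensional/character-theoretic cancellations noted above are exactly what remove, or sufficiently dampen, this second term; verifying this uniformly in the full range $|F|\leq 4q^{(d+1)/2}$, and quantifying the implicit constants so that the final comparison $|F|^2\geq(C/4)q^d$ actually closes, is the technical heart of the argument.
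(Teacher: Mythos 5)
Your opening reductions are fine, the identity $\sum_{s\neq 0}|T_F(s)|^2=q^{1-d}\sum_r\bigl(G_F(r)-q^{d-1}|F|\bigr)^2$ is correct, and you have correctly identified the zero-radius restriction estimate (Proposition \ref{pro2.4} in the paper) as the key tool. The fatal step is the Young's inequality reduction $\sum_t\nu(t)^2\le |E|^4\Lambda_F$. After it, the only information about $E$ you retain is $|E|<4q^{(d+1)/2}$, so your argument, if it closed, would show that $|F|^2\gtrsim q^d$ alone forces $\Lambda_F<2|F|^4/q$ and hence $|\Delta(E)+\Delta(F)|>q/2$ for \emph{every} nonempty $E$. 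That single-set statement is false in the allowed range $|F|\le 4q^{(d+1)/2}$: by Cauchy--Schwarz it would give $|\Delta(F)|\ge |F|^4/\Lambda_F>q/2$ whenever $|F|\gg q^{d/2}$, i.e.\ the Erd\H{o}s--Falconer exponent $d/2$ in odd dimensions, contradicting the sharpness examples from \cite{hart} quoted in the paper (for any $\epsilon>0$ there is $F$ with $|F|\sim q^{(d+1)/2-\epsilon}$ and $|\Delta(F)|\sim q^{1-\epsilon}$; for such $F$, $\Lambda_F\ge |F|^4/|\Delta(F)|\sim q^{\epsilon}|F|^4/q$). Equivalently, your claimed bound $\sum_r(G_F(r)-q^{d-1}|F|)^2\ll q^{2d-1}|F|^2$ fails for these sets, and it cannot be rescued by the restriction theorem: the $2q^{(d-1)/2}|F|^2$ term in $G_F(r)\le q^{d-1}|F|+2q^{(d-1)/2}|F|^2$ is itself sharp at $r=0$ in odd dimensions (an isotropic subspace $F$ of dimension $\frac{d-1}{2}$ has $G_F(0)\ge|F|^3$), so the Gauss-sum cancellations do not remove it, and squaring and summing it over $r$ produces $q^{d}|F|^4$, far above the needed $q^{d-1}|F|^4$ unless $|F|\gtrsim q^{(d+1)/2}$ --- the trivial Iosevich--Rudnev regime.

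The paper avoids exactly this loss by never decoupling $E$ from $F$: it bounds $\sum_t\nu(t)^2$ through the variety $V_0\subset\mathbb{F}_q^{4d}$ (Lemma \ref{goodLem}), then factors the Fourier side asymmetrically as $\widehat{E\times F\times E}(M')\cdot\widehat{F}(m)$ with $m$ constrained to a sphere, so Plancherel absorbs the triple product (contributing $q^{-3d}|E|^2|F|$) while Proposition \ref{pro2.4} is applied to a single copy of $F$. This yields $\sum_t\nu(t)^2\le \frac{|E|^4|F|^4}{q}+q^{2d-1}|E|^2|F|^2+2q^{(3d-1)/2}|E|^2|F|^3$, whose error terms carry $|E|^2$ rather than your $|E|^4$, and they are dominated by the main term precisely under $|E|^2|F|\ge Cq^{(3d+1)/2}$ (together with $|E||F|\ge 2q^d$, which follows from it and $|E|<4q^{(d+1)/2}$). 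To repair your write-up you would need to replace the convolution/Young step by such a mixed bound; as proposed, the reduction to $\Lambda_F<2|F|^4/q$ is not a technical gap to be filled but a false intermediate statement.
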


To see how much Theorem \ref{thm0} is better than Theorem \ref{thm1mz}, let us make a brief comparison.

If $|E||F|\ge  Cq^{d+\frac{1}{3}}$, then either $|E||F|^2\ge Cq^{\frac{3d+1}{2}}$ or $|E|^2|F|\ge Cq^{\frac{3d+1}{2}}$. Indeed, otherwise, one has $|E||F|< C^{\frac{2}{3}}q^{d+\frac{1}{3}}$, a contradiction. When $|E|=|F|$, both Theorems \ref{thm1mz} and \ref{thm0} give the same exponent $\frac{d}{2}+\frac{1}{6}$. For $0<\epsilon<\frac{1}{3}$, one can check that for $E, F\subset \mathbb{F}_q^d$ with $|E|=q^{\frac{d-1}{2}+2\epsilon}$ and $|F|=q^{\frac{d+1}{2}-\epsilon}$, we have $|E||F|=q^{d+\epsilon}$. For these sets, Theorem \ref{thm1mz} does not tell us about the size of $\Delta(E)+\Delta(F)$, but Theorem \ref{thm0} gives the expected lower bound $cq$.

Theorem \ref{thm0} is essentially sharp in odd dimensions. To show this, let us recall the following result from \cite{hart}. If either $d\ge 2$ is even and $q\equiv 1\mod 4$ or $d=4k$, $k\in \mathbb{N}$, then there exist $\frac{d}{2}$ independent vectors $\{v_1, \ldots, v_{\frac{d}{2}}\}$ in $\mathbb{F}_q^d$ such that $v_i\cdot v_j=0$ for all $1\le i, j\le \frac{d}{2}$. Thus, if either $d=4k+3$ and $q\equiv 1\mod 4$ or $d=4k+1$, $k\in \mathbb{N}$, then we always can choose a subspace $V$ of $\frac{d-1}{2}$ vectors in $\mathbb{F}_q^{d-1}\times \{0\}$ such that $u\cdot v=0$ for all $u, v\in V$. Set $E=V$. It is not hard to check that $\Delta(E)=\{0\}$.
It has also been indicated in \cite{hart} that for any $\epsilon>0$, there exists a set $F\subset \mathbb{F}_q^d$ such that $|F|\sim q^{\frac{d+1}{2}-\epsilon}$ and $|\Delta(F)|\sim q^{1-\epsilon}$. In other words, we have $|E||F|^2\sim q^{\frac{3d+1}{2}-2\epsilon}$ and $|\Delta(E)+\Delta(F)|\sim q^{1-\epsilon}$ for any $\epsilon>0$.

It is worth noting that one can not hope to prove Theorem \ref{thm0} in all even dimensions with $q\equiv 1\mod 4$, namely, in Section \ref{prof}, we will construct examples which tell us that there are sets $E,F\subset \mathbb{F}_q^d$ with $d$ even and $q\equiv 1\mod 4$ such that $|E||F|^2\sim q^{\frac{3d}{2}+\frac{2}{3}}$ and $|\Delta(E)+\Delta(F)|\le q/2$.

While Theorem \ref{thm0} is sharp in odd dimensions, we believe that in the corresponding even dimensions the right condition should be $|E||F|^2\ge C q^\frac{3d}{2}$ or $|E|^2|F|\ge C q^\frac{3d}{2}$, which is in line with the Erd\H{o}s-Falconer distance conjecture. The difference between these cases will be seen clearly in our coming proofs.  Let us discuss shortly the main obstacle here. In our method, we will reduce the problem to making a spherical $L^2$ restriction estimate, more precisely, to finding a good upper bound of   $\max_{r\in \mathbb F_q} \sum_{m\in S_r^{d-1}} |\widehat{F}(m)|^2$ for $F\subset \mathbb{F}_q^d$, which might be attained at $r=0$ since the sphere $S_0^{d-1}$ of zero radius has the structure which is similar to that of cones whose Fourier decay is generally not very good (for example, see \cite{kohcone}). More precisely, if we rewrite
\[\sum_{m\in S_0^{d-1}} |\widehat{F}(m)|^2=q^{-d}\sum_{x, y\in F}\widehat{S_0^{d-1}}(x-y)\le q^{-d}|F|^2\cdot\max_{m\ne \mathbf{0}}|\widehat{S_0^{d-1}}(m)|\]
then, it is known that $\max_{m\ne \mathbf{0}}|\widehat{S_0^{d-1}}(m)|\sim q^{-\frac{d+1}{2}}$ for $d\ge 3$ odd, but the maximal value is close to $q^{-\frac{d}{2}}$ for $d\ge 4$ even. Therefore, to overcome this situation, one can think of two ways: either removing the contribution from $S_0^{d-1}$  or finding conditions on $q$ and $d$ in which the corresponding Fourier decay is not worse. In the former case,  
by following the proof of Theorem 3.5 in \cite{kohsun} with some slight modifications, we have $|\Delta(E)+\Delta(F)|\ge q/144$ under two conditions $|E|^2|F|\ge Cq^{\frac{3d+1}{2}}$ and $|F|^2|E|\ge Cq^{\frac{3d+1}{2}}$, for some large positive constant $C$. This result holds without any conditions on $q$ and $d$. However, it is clear that this result is weaker than both Theorems \ref{thm1mz} and \ref{thm0}. Hence, it seems that the latter will be our last chance to get some improvements. It has been recently indicated in \cite{IKSPS} that when $d=4k-2$ and $q\equiv 3\mod 4$, the zero radius sphere gives us an optimal $L^2$ restriction theorem, which is much better than that of spheres of non-zero radii. This interesting aspect comes from the fact that in the Fourier decay of $S_0^{d-1}$, the Gauss sum, whose explicit form is known, plays the crucial role. Gluing these observations together gives us Theorem \ref{thm0}. 

Restricting our attention to prime fields,  we obtain an improvement of Theorem \ref{thm0} in two dimensions. The key ingredient in our proof comes from the very recent paper on the Erd\H{o}s-Falconer distance conjecture in the plane, which says that for any set $E\subset \mathbb{F}_q^2$ with $|E|\ge q^{5/4}$, one has $|\Delta(E)|\gg q$. We refer the interested reader to \cite{mu} for more details.
\begin{theorem}\label{primecase}
Let $q\equiv 3\mod 4$ be a prime number, $E, F$ be sets in $\mathbb{F}_q^2$, and $C$ be a large positive constant. If either $|E|^4|F|^6\ge Cq^{11}$ or $|E|^6|F|^4\ge Cq^{11},$ then we have \[|\Delta(E)+\Delta(F)|\gg q.\]
\end{theorem}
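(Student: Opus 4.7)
The plan is to combine the Fourier-analytic scheme from the proof of Theorem~\ref{thm0} (applicable in $d=2$ since $2=4\cdot 1-2$ and $q\equiv 3\pmod 4$) with the planar Erd\H{o}s--Falconer distance bound of \cite{mu}, which asserts $|\Delta(G)|\gg q$ for every $G\subset\mathbb{F}_q^2$ with $|G|\ge q^{5/4}$. Without loss of generality suppose $|E|^4|F|^6\ge Cq^{11}$. If either $|E|\ge q^{5/4}$ or $|F|\ge q^{5/4}$, then \cite{mu} combined with $0\in\Delta(E)\cap\Delta(F)$ immediately gives $|\Delta(E)+\Delta(F)|\ge \max(|\Delta(E)|,|\Delta(F)|)\gg q$. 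So in what follows I may assume $|E|,|F|<q^{5/4}$, which together with the hypothesis forces $|E|\gtrsim q^{7/8}$ and $|F|\gtrsim q$.

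In this remaining range I run the scheme used in Theorem~\ref{thm0}. Set
\[N(\lambda):=\#\{(x,y,z,w)\in E\times F\times E\times F:\|x-y\|+\|z-w\|=\lambda\}.\]
Cauchy--Schwarz gives $|\Delta(E)+\Delta(F)|\ge |E|^4|F|^4/M$ with $M:=\sum_\lambda N(\lambda)^2$, so the task becomes to show $M\ll |E|^4|F|^4/q$. Expanding $M$ by Plancherel on $\mathbb{F}_q$ produces the main term $|E|^4|F|^4/q$ and an error expression which, after spherical decomposition on $\mathbb{F}_q^2$, reduces to controlling products of the spherical averages $A_G(r):=\sum_{m\in S_r^{1}}|\widehat{G}(m)|^2$ summed over $r\in\mathbb{F}_q$. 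The $r=0$ contribution is handled by the optimal zero-sphere $L^2$ restriction estimate from \cite{IKSPS}, whose sharpness comes from the explicit Gauss sum evaluation valid in the $q\equiv 3\pmod 4$ regime, exactly as in the proof of Theorem~\ref{thm0}.

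The new input enters in the non-zero-radius estimate. Here I will use the planar distance-set bound of \cite{mu} to sharpen $\sum_{r\ne 0}A_G(r)^2$ for $G\in\{E,F\}$; Plancherel essentially identifies this quantity with the second moment of the distance-multiplicity function $\sigma_G(\mu):=\#\{(x,y)\in G^2:\|x-y\|=\mu\}$, and \cite{mu} (through the underlying Stevens--de Zeeuw point-line incidence bound over $\mathbb{F}_p$) yields a gain over the generic restriction bound used in Theorem~\ref{thm0}. Applying this gain separately to $E$ and $F$ and recombining via Cauchy--Schwarz in $r$ produces an error term dominated by $|E|^4|F|^4/q$ precisely when $|E|^4|F|^6\ge Cq^{11}$, closing the argument.

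\textbf{Main obstacle.} The decisive and most technical step is to convert \cite{mu}'s distance-set bound (an $L^1$-type statement for sets of size $\ge q^{5/4}$) into a quantitative upper bound of the form $\sum_\mu\sigma_G(\mu)^2\lesssim |G|^4/q+(\text{lower order})$ that remains effective for sets slightly below $q^{5/4}$, since this is precisely the range in which one is forced to work after disposing of the easy cases. I expect this will require a dyadic decomposition on the level sets of $\sigma_G$ together with a direct appeal to Stevens--de Zeeuw over $\mathbb{F}_p$, and extra care will be needed to make the resulting lower-order terms balance against the optimal zero-radius contribution.
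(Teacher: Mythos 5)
Your proposal does not close: the decisive quantitative ingredient is exactly the step you flag as the ``main obstacle'' and then leave unproved. After reducing to $|E|,|F|<q^{5/4}$, everything hinges on a second-moment (distance-energy) bound for a single planar set of size \emph{below} $q^{5/4}$, and neither the $L^1$-type statement $|\Delta(G)|\gg q$ for $|G|\ge q^{5/4}$ from \cite{mu} nor the zero-sphere restriction estimate of \cite{IKSPS} supplies it; you only conjecture that a dyadic decomposition plus Stevens--de Zeeuw would yield it. Moreover, the identification you invoke (``Plancherel essentially identifies $\sum_{r\ne 0}A_G(r)^2$ with $\sum_\mu\sigma_G(\mu)^2$'') is not a clean identity: $A_G(r)=\sum_{m\in S_r^{1}}|\widehat G(m)|^2$ is a Fourier-side restriction quantity, and relating it to the distance multiplicity $\sigma_G$ introduces Kloosterman-type error terms that your sketch does not control. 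So as written the argument has a genuine gap at its core, and the auxiliary step of ``recombining via Cauchy--Schwarz in $r$'' is also not justified.

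For comparison, the paper's proof avoids the Theorem \ref{thm0} machinery entirely for this statement. It uses Proposition \ref{proLove} (from \cite{pham}), which gives directly
\[
\sum_{r}\nu(r)^2\le \frac{|E|^4|F|^4}{q}+q^{2}|F|^2\sum_{r}\mu(r)^2 ,
\]
so only the distance energy of $E$ alone is needed, with no spherical decomposition and no zero-radius analysis. The hypothesis $q\equiv 3\pmod 4$ enters only through the absence of nonzero isotropic vectors in $\mathbb{F}_q^2$, which via Cauchy--Schwarz in the pin gives $\sum_r\mu(r)^2\le |E|\bigl(T(E)+|E|^2\bigr)$, where $T(E)$ counts isosceles triples with nonzero base. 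The missing energy bound you need is then precisely Theorem 4 of \cite{mu}: $T(E)\ll |E|^3/q+q^{2/3}|E|^{5/3}+q^{1/4}|E|^2\ll q^{2/3}|E|^{5/3}$ for $|E|\le q^{5/4}$, giving $\sum_r\mu(r)^2\ll q^{2/3}|E|^{8/3}$ and hence $\sum_r\nu(r)^2\ll |E|^4|F|^4/q$ exactly when $|E|^4|F|^6\gg q^{11}$. In short, the ingredient you hoped to extract from Stevens--de Zeeuw already exists in the cited literature in the form of an isosceles-triangle bound, but your write-up neither proves it nor cites it, and your Fourier-side route to exploit it is not set up correctly; to repair the proof, replace the spherical decomposition by Proposition \ref{proLove} and import \cite[Theorem 4]{mu} together with the $q\equiv 3\pmod 4$ observation.
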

%Considering the case when $ E=F,$
The following corollary is an immediate consequence of the above theorem.
\begin{corollary}
Let $q\equiv 3\mod 4$ be a prime number, $E$ be a set in $\mathbb{F}_q^2$, and $C$ be a large positive constant. If $|E|\ge Cq^{11/10}$, then we have \[|\Delta(E)+\Delta(E)|\gg q.\]
\end{corollary}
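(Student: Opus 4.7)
The plan is to deduce the corollary as an immediate specialization of Theorem \ref{primecase} by taking $F=E$. Under this substitution, the two hypotheses $|E|^4|F|^6\ge Cq^{11}$ and $|E|^6|F|^4\ge Cq^{11}$ of Theorem \ref{primecase} both collapse to the single condition
\[
|E|^{10}\ge Cq^{11},
\]
which is equivalent to $|E|\ge C^{1/10}q^{11/10}$. Replacing the constant $C$ by $C^{10}$ (still a large absolute positive constant), the hypothesis $|E|\ge Cq^{11/10}$ of the corollary implies the hypothesis of Theorem \ref{primecase} with $F=E$. The arithmetic hypotheses on $q$ (namely, $q\equiv 3\mod 4$ prime) are identical in the two statements, so no further verification is needed on that side. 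Applying Theorem \ref{primecase} then yields
\[
|\Delta(E)+\Delta(E)|\gg q,
\]
which is exactly the conclusion of the corollary.

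There is essentially no obstacle here, since the corollary is a diagonal instance of the theorem; the only bookkeeping is to absorb the $10$th root into the implicit constant $C$. In particular, no independent argument or new input (such as the $L^2$ restriction estimate for the zero-radius sphere or the planar distance bound of \cite{mu}) is required beyond what is already used in the proof of Theorem \ref{primecase}.
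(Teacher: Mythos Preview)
Your proof is correct and matches the paper's approach exactly: the paper simply states that the corollary is an immediate consequence of Theorem~\ref{primecase}, which is precisely the specialization $F=E$ you carry out.
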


\section{ preliminary lemmas}

%In this section, we collect  auxiliary lemmas  which play an important role in proving  Theorem 2.1. They will be consequences  obtained by using the discrete Fourier analysis which we review below. They will be consequences  obtained by using  which we review below.
In this section, we review some basics on the discrete Fourier analysis, and then use these to  derive  some lemmas  that will be used in proving  Theorem 2.1.

\subsection{Discrete Fourier analysis and Gauss sums}
The Fourier transform is defined by
$$ \widehat{f}(\alpha)=q^{-n} \sum_{\beta\in \mathbb F_q^n} \chi(-\alpha\cdot \beta) f(\beta),$$
where $f$ is a complex valued function on $\mathbb F_q^n.$
Here, and throughout this paper,  $\chi$ denotes the principle additive character of $\mathbb F_q.$  Recall that
the orthogonality of the additive character $\chi$ says that
$$ \sum_{\alpha\in \mathbb F_q^n} \chi(\beta\cdot \alpha)
=\left\{\begin{array}{ll} 0\quad &\mbox{if}\quad \beta\ne (0,\ldots, 0),\\
q^n\quad &\mbox{if}\quad \beta=(0,\ldots,0). \end{array}\right.$$
As a direct application of the orthogonality of $\chi$, the following Plancherel theorem can be proved:
 for any set $\Omega$ in $\mathbb F_q^n,$
$$ \sum_{\alpha\in \mathbb F_q^n} |\widehat{\Omega}(\alpha)|^2= q^{-n}|\Omega|.$$
In this paper,  we identify a set $\Omega$ with the indicator function $1_\Omega$ on $\Omega.$
It is not hard to prove the following formula which is referred as the Fourier inversion theorem
$$f(\beta)=\sum_{\alpha\in \mathbb F_q^n} \chi(\alpha\cdot \beta) \widehat{f}(\alpha).$$

Throughout this paper,  we denote by $\eta$  the quadratic character of $\mathbb F_q.$   For $a\in \mathbb F_q^*,$ the Gauss sum $G_a$ is defined by
$$G_a=\sum_{s\in \mathbb F_q^*}\eta(s) \chi(as).$$
The Gauss sum $G_a$ is also written as
$$ G_a=\sum_{s\in \mathbb F_q} \chi(as^2)=\eta(a) G_1.$$
The absolute value of the Gauss sum $G_a$ is exactly $q.$ Moreover, the explicit %form 
value of the Gauss sum $G_1$ is well--known.
\begin{lemma}[\cite{LN97}, Theorem 5.15]\label{ExplicitGauss}
Let $\mathbb F_q$ be a finite field with $ q= p^{\ell},$ where $p$ is an odd prime and $\ell \in {\mathbb N}.$
Then we have
$$G_1= \left\{\begin{array}{ll}  {(-1)}^{\ell-1} q^{\frac{1}{2}} \quad &\mbox{if} \quad p \equiv 1 \mod 4 \\
{(-1)}^{\ell-1} i^\ell q^{\frac{1}{2}} \quad &\mbox{if} \quad p\equiv 3 \mod 4.\end{array}\right. $$
\end{lemma}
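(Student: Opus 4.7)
The plan is to combine two classical ingredients: Gauss's sign determination for the prime-field quadratic Gauss sum, followed by the Hasse--Davenport lifting relation to pass to extension fields.

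First I would verify the modulus $|G_1| = q^{1/2}$. Writing $|G_1|^2 = \sum_{s,t \in \mathbb{F}_q}\chi(s^2 - t^2)$, substituting $t = s + u$, and exchanging the order of summation, the inner sum $\sum_s \chi(2su + u^2)$ equals $q$ when $u = 0$ (using that the characteristic is odd, so $2 \neq 0$ in $\mathbb{F}_q$) and vanishes otherwise by orthogonality of $\chi$. Hence $G_1 = \epsilon\, q^{1/2}$ for some unit-modulus complex number $\epsilon$, and the remaining task is to pin down $\epsilon$.

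For the prime-field case $q = p$ (i.e.\ $\ell = 1$), I would take Gauss's classical sign theorem as a black box: $G_1 = p^{1/2}$ when $p \equiv 1 \pmod{4}$ and $G_1 = i\,p^{1/2}$ when $p \equiv 3 \pmod{4}$. The standard proofs proceed either analytically (Poisson summation applied to a Jacobi theta function at a rational point) or algebraically (diagonalizing the discrete Fourier transform matrix on $\mathbb{Z}/p$ and computing its trace via Schur's argument). Historically this step took Gauss several years to settle; for present purposes I would simply cite it from Lidl--Niederreiter rather than reprove it.

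Finally, for $q = p^{\ell}$ with $\ell \geq 2$, I would invoke the Hasse--Davenport relation. Let $\chi_0, \eta_0$ denote the canonical additive and quadratic characters on $\mathbb{F}_p$, and let $\chi, \eta$ be the corresponding characters on $\mathbb{F}_q$ obtained by composition with the trace and norm respectively. Then
\[
G(\eta, \chi) \;=\; -\bigl(-G(\eta_0, \chi_0)\bigr)^{\ell} \;=\; (-1)^{\ell-1}\, G(\eta_0, \chi_0)^{\ell}.
\]
Substituting the prime-field values from the previous step yields $(-1)^{\ell-1} q^{1/2}$ when $p \equiv 1 \pmod{4}$ and $(-1)^{\ell-1} i^{\ell} q^{1/2}$ when $p \equiv 3 \pmod{4}$, which is exactly the stated formula.

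The main obstacle is clearly the prime-field sign determination: the modulus computation is an elementary orthogonality calculation, and the lifting from $\mathbb{F}_p$ to $\mathbb{F}_{p^\ell}$ through Hasse--Davenport is a clean invocation of a standard identity. Pinning down $\arg(G_1)$ over a prime field is the historically and technically deepest input, and in a working proof I would either cite it or spend essentially all of the effort there.
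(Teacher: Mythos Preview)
The paper does not give its own proof of this lemma; it is simply stated with a citation to Lidl--Niederreiter, Theorem~5.15, and used as a black box thereafter. Your sketch is a correct outline of the standard argument (and is essentially the route taken in that reference): the modulus computation is a routine orthogonality calculation, Gauss's sign theorem settles the base case $\ell=1$, and the Hasse--Davenport relation $-G(\eta,\chi)=\bigl(-G(\eta_0,\chi_0)\bigr)^{\ell}$ lifts it to $\mathbb{F}_{p^{\ell}}$. One small point worth making explicit in a written-out proof is that the ``principal additive character'' and the quadratic character on $\mathbb{F}_q$ used in the paper are indeed the lifts of the corresponding prime-field characters via the trace and norm maps, so that Hasse--Davenport applies without adjustment; this is the case, since the canonical additive character is defined through the absolute trace and $\eta_0\circ N_{\mathbb{F}_q/\mathbb{F}_p}$ coincides with the quadratic character of $\mathbb{F}_q^*$.
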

The following corollary  follows from the explicit value of the  Gauss sum $G_1$. For the sake of completeness, we include a proof here. 
\begin{corollary} \label{Corm}Let $\eta$ be the quadratic character of $\mathbb F_q^*.$ Then, for any positive integer $n\equiv 2 \mod {4}$ and $q\equiv 3 \mod{4},$ we have
$$ G_1^n= -q^{\frac{n}{2}}.$$
\end{corollary}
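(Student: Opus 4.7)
The plan is to reduce to the explicit form of $G_1$ given by Lemma \ref{ExplicitGauss} and then perform a direct computation.

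First I would pin down what $q \equiv 3 \bmod 4$ means at the level of the prime factorization $q = p^\ell$. Since $p$ is odd, if $p \equiv 1 \bmod 4$ then $p^\ell \equiv 1 \bmod 4$ always, so the assumption $q \equiv 3 \bmod 4$ forces $p \equiv 3 \bmod 4$. Moreover, in that case $p^\ell \equiv 3^\ell \bmod 4$, which equals $3$ exactly when $\ell$ is odd, so we also obtain that $\ell$ is odd. With this in hand, Lemma \ref{ExplicitGauss} (the $p \equiv 3 \bmod 4$ case) specializes to
\[
G_1 = (-1)^{\ell-1} i^{\ell} q^{1/2} = i^{\ell} q^{1/2},
\]
since $\ell-1$ is even.

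Next I would raise this identity to the $n$-th power:
\[
G_1^{n} = i^{n\ell}\, q^{n/2}.
\]
The remaining task is to evaluate $i^{n\ell}$. Writing $n = 4k+2$, we get $n\ell = 4k\ell + 2\ell$, so $n\ell \equiv 2\ell \pmod 4$. Because $\ell$ is odd, $2\ell \equiv 2 \pmod 4$, hence $i^{n\ell} = i^{2} = -1$. Combining, $G_1^{n} = -q^{n/2}$, which is exactly the claim.

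The argument is essentially arithmetical, so I do not expect a genuine obstacle; the only mildly subtle point is the opening observation that $q \equiv 3 \bmod 4$ actually forces both $p \equiv 3 \bmod 4$ and $\ell$ odd, without which the relevant branch of Lemma \ref{ExplicitGauss} could not be invoked. Everything else reduces to tracking $i^{n\ell}$ modulo $4$.
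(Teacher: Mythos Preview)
Your proof is correct and follows essentially the same approach as the paper's own proof: both invoke Lemma~\ref{ExplicitGauss} in the case $p\equiv 3\bmod 4$, use that $\ell$ is odd, and reduce the computation of $G_1^n$ to evaluating a power of $i$ modulo $4$. If anything, you are slightly more careful than the paper in spelling out why $q\equiv 3\bmod 4$ forces both $p\equiv 3\bmod 4$ and $\ell$ odd.
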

\begin{proof} Since $q\equiv 3 \mod{4}$, Lemma \ref{ExplicitGauss} implies that
$ G_1= {(-1)}^{\ell-1} i^\ell q^{\frac{1}{2}}$ for some odd integer $\ell \ge 1.$
Since $n=4k-2$  with $k\in \mathbb N,$ we have
$$ G_1^n=  (-1)^{(\ell-1) (4k-2)}   i^{\ell (4k-2)}  q^{n/2}= (-1)^\ell q^{n/2}= -q^{n/2}.$$ \end{proof}

Completing the square and using a change of variables, it is not hard to show that
\begin{equation}\label{ComSqu}
 \sum_{s\in \mathbb F_q} \chi(as^2+bs)= \eta(a)G_1 \chi\left(\frac{b^2}{-4a}\right),\end{equation}
%We will often use this formula which will be named the complete square formula.
for any $a\in \mathbb{F}_q^*$ and $b\in \mathbb{F}_q$.

\subsection{ $L^2$ Fourier restriction estimates for spheres}
We recall that  for each $ r\in \mathbb F_q,$  the sphere $S^{d-1}_r$ in $\mathbb F_q^d$ with radius $r$ is defined by
$$ S^{d-1}_r=\{x\in \mathbb F_q^d: \sum_{i=1}^d x_i^2=r\}.$$
Notice that  compared to the Euclidean setting, we have no condition on $r$. 

It is well--known that the Fourier transform $\widehat{S^{d-1}_j}(m)$   is closely related to  the Kloosterman sum
$$K(a, b):= \sum_{s\in \mathbb F_q^*}  \chi(as+ b/s),$$
or  the twisted Kloosterman sum
$$TK(a,b):=  \sum_{s\in \mathbb F_q^*} \eta(s) \chi(as+ b/s),$$
where $a, b\in \mathbb F_q,$ and $\eta$ denotes the quadratic character of $\mathbb F_q.$ In particular, the next lemma  was given in \cite[Lemma 4]{IK10}.
\begin{lemma} \label{FTFS}  For $m\in \mathbb F_q^d,$  let $\delta_0(m)=1$  if $m=(0,\ldots, 0)$ and  $0$ otherwise.
\begin{enumerate}
\item  If $d\ge 3$ is an odd integer, then  for $m\in \mathbb F_q^d,$
$$ \widehat{S^{d-1}_j}(m)=  q^{-1} \delta_0(m) + q^{-d-1} \eta(-1)G_1^d TK\left(j, \frac{||m||}{4}\right).$$
%\sum_{r\in \mathbb F_q^*} \eta(r) \chi\left( jr + \frac{||m||}{4r}\right).$$
\item  If $d\ge 2$ is an even integer, then  for $m\in \mathbb F_q^d,$
$$ \widehat{S^{d-1}_j}(m)=  q^{-1} \delta_0(m) + q^{-d-1} G_1^d  K\left(j, \frac{||m||}{4}\right).$$
%\sum_{r\in \mathbb F_q^*}  \chi\left( jr + \frac{||m||}{4r}\right),$$
\end{enumerate}
\end{lemma}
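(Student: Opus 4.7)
\textbf{Proof plan for Lemma \ref{FTFS}.} The strategy is to encode the constraint $\|x\|=j$ by the orthogonality of $\chi$, then reduce the resulting $d$-dimensional exponential sum to a product of one-dimensional quadratic Gauss sums via the completing-the-square identity \eqref{ComSqu}, and finally recognize what remains as a (possibly twisted) Kloosterman sum after a sign change.

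Concretely, I would start from
\[
1_{S^{d-1}_j}(x)=\frac{1}{q}\sum_{t\in\mathbb F_q}\chi\!\bigl(t(\|x\|-j)\bigr),
\]
plug it into the definition of the Fourier transform to get
\[
\widehat{S^{d-1}_j}(m)=q^{-d-1}\sum_{t\in\mathbb F_q}\chi(-jt)\sum_{x\in\mathbb F_q^d}\chi\!\bigl(t\|x\|-m\cdot x\bigr),
\]
and split off the $t=0$ term. By orthogonality the $t=0$ contribution equals $q^{-d-1}\cdot q^d\,\delta_0(m)=q^{-1}\delta_0(m)$, which matches the first summand in both formulas.

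For the $t\neq 0$ part, the inner sum over $x$ factors coordinate-wise and \eqref{ComSqu} applied to each factor with $a=t$ and $b=-m_i$ gives
\[
\sum_{x\in\mathbb F_q^d}\chi\!\bigl(t\|x\|-m\cdot x\bigr)=\bigl(\eta(t)G_1\bigr)^d\,\chi\!\left(-\frac{\|m\|}{4t}\right).
\]
Substituting back, the $t\ne 0$ contribution becomes
\[
q^{-d-1}G_1^d\sum_{t\ne 0}\eta(t)^d\,\chi\!\left(-jt-\frac{\|m\|}{4t}\right),
\]
and the change of variables $t\mapsto -s$ rewrites this as
\[
q^{-d-1}G_1^d\,\eta(-1)^d\sum_{s\ne 0}\eta(s)^d\,\chi\!\left(js+\frac{\|m\|}{4s}\right).
\]

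The last step is to specialize by the parity of $d$. When $d$ is odd, $\eta(s)^d=\eta(s)$ and the character sum is exactly $TK(j,\|m\|/4)$, while the leading factor carries $\eta(-1)^d=\eta(-1)$, yielding formula (1). When $d$ is even, $\eta(s)^d\equiv 1$ and $\eta(-1)^d=1$, so the character sum is the untwisted Kloosterman sum $K(j,\|m\|/4)$ and we obtain formula (2). The only delicate point is the bookkeeping of $\eta(-1)$ through the change of variable; there is no real obstacle since this is a standard identity already recorded in \cite{IK10}.
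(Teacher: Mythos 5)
Your derivation is correct: the $t=0$ term gives $q^{-1}\delta_0(m)$, the coordinatewise application of \eqref{ComSqu} produces $\eta(t)^dG_1^d\chi(-\|m\|/(4t))$, and the sign change $t\mapsto -s$ together with the parity of $\eta^d$ yields $\eta(-1)G_1^d\,TK(j,\|m\|/4)$ for $d$ odd and $G_1^d\,K(j,\|m\|/4)$ for $d$ even, exactly as claimed. The paper itself does not prove this lemma but simply cites \cite{IK10}, and your argument is precisely the standard proof given in that reference, so there is nothing to add.
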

Recall that   $|K(a,b)|\le 2 q^{1/2}$ if $ab\ne 0,$ and  $|TK(a,b)|\le 2 q^{1/2}$ if $a, b\in \mathbb F_q$ (for example, see \cite{LN97}).

For $F\subset \mathbb F_q^d$  and $j\in \mathbb F_q, $ consider the  $L^2$ Fourier restriction  for the sphere $S_j^{d-1}$
$$ \mathcal{M}_j(F):=\sum_{m\in S_j^{d-1}} |\widehat{F}(m)|^2.$$
It follows from the definition of the Fourier transform that
\begin{equation}\label{MjF}\mathcal{M}_j(F) =  q^{-d} \sum_{x,y\in F} \widehat{S_j^{d-1}}(x-y).\end{equation}
In the next result, we give an upper bound for this quantity. 
\begin{proposition}\label{pro2.4} Let $F$ be a subset of $\mathbb F_q^d.$  Suppose that  either $d=4k-2$ and $q\equiv 3\mod 4$ or $d\ge 3$ is odd. Then we have
$$ \max_{j\in \mathbb F_q} \mathcal{M}_j(F) \le q^{-d-1} |F| + 2 q^{\frac{-3d-1}{2}} |F|^2.$$
\end{proposition}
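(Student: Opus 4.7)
The plan is to start from the Fourier identity \eqref{MjF} and substitute the explicit formula for $\widehat{S_j^{d-1}}$ provided by Lemma~\ref{FTFS}. In both dimensional cases the Fourier transform of the sphere decomposes as a $q^{-1}\delta_0$ principal term plus a Kloosterman-type oscillatory term scaled by the factor $q^{-d-1}G_1^d$. Plugging into $\mathcal{M}_j(F)=q^{-d}\sum_{x,y\in F}\widehat{S_j^{d-1}}(x-y)$, the $\delta_0$ part collapses onto the diagonal $x=y$ and yields exactly $q^{-d-1}|F|$, matching the first term on the right-hand side. Since $|G_1^d|=q^{d/2}$, the proof therefore reduces to bounding the remaining inner double sum, of either twisted Kloosterman or Kloosterman values, by $2q^{1/2}|F|^2$.

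For $d\geq 3$ odd the resulting sum involves the twisted Kloosterman sum $TK(j,\|x-y\|/4)$, and the Weil-type bound $|TK(a,b)|\leq 2q^{1/2}$ recalled after Lemma~\ref{FTFS} holds \emph{uniformly} over all $a,b\in\mathbb{F}_q$; a term-by-term estimate then immediately delivers the desired $2q^{1/2}|F|^2$. For $d=4k-2$ with $q\equiv 3\pmod 4$ and $j\neq 0$, I would split the double sum at $\|x-y\|=0$: the off-diagonal pairs use $|K(j,b)|\leq 2q^{1/2}$ for $b\neq 0$, while the remaining pairs use the direct computation $K(j,0)=\sum_{s\neq 0}\chi(js)=-1$. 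Combining, the total is at most $2q^{1/2}(|F|^2-N)+N\leq 2q^{1/2}|F|^2$, where $N:=|\{(x,y)\in F^2:\|x-y\|=0\}|$.

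The main obstacle is the degenerate case $j=0$ in the even-dimensional setting, since $K(0,0)=q-1$ is too large for the uniform Weil bound. Here I would evaluate $\sum_{x,y\in F}K(0,\|x-y\|/4)$ exactly: using $K(0,0)=q-1$ and $K(0,b)=-1$ for $b\neq 0$, it equals $(q-1)N-(|F|^2-N)=qN-|F|^2$. Now Corollary~\ref{Corm} supplies the explicit real value $G_1^d=-q^{d/2}$, so the oscillatory contribution to $\mathcal{M}_0(F)$ becomes $q^{-(3d+2)/2}|F|^2-q^{-3d/2}N$. The second summand is non-positive and may be dropped, and since $q^{-(3d+2)/2}\leq q^{-(3d+1)/2}$ the first summand is absorbed into the claimed bound. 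The point is that the sign of $G_1^d$ (not merely its modulus) turns the problematic zero-distance contribution into a favorable correction; this is precisely why the hypothesis $d=4k-2$, $q\equiv 3\pmod 4$ is singled out in the statement, while in general even dimensions one cannot control the $j=0$ term this way.
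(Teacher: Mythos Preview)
Your proposal is correct and follows essentially the same route as the paper: expand $\mathcal{M}_j(F)$ via \eqref{MjF} and Lemma~\ref{FTFS}, isolate the diagonal term $q^{-d-1}|F|$, apply the twisted/ordinary Kloosterman bound for the generic oscillatory part, and in the degenerate case $j=0$ with $d=4k-2$, $q\equiv 3\pmod 4$ evaluate the Kloosterman sums exactly and exploit the sign $G_1^d=-q^{d/2}$ from Corollary~\ref{Corm} to discard the zero-distance contribution. Your treatment of the case $j\neq 0$, $\|x-y\|=0$ is in fact slightly more careful than the paper's (which tacitly uses $|K(j,0)|=1\le 2q^{1/2}$), but the argument is otherwise identical.
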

\begin{proof}
{\bf Case $1$:} Suppose that  $d$ is odd.  Combining \eqref{MjF} with  the first part of  Lemma \ref{FTFS},  we get
$$ \mathcal{M}_j(F) =q^{-d-1} |F| + q^{-2d-1}G_1^d \eta(-1) \sum_{x,y\in F}  TK(j, ||x-y||/4).$$
Since the absolute value of the twisted Kloosterman sum is bounded by $2 q^{1/2}$ and $ |G_1|=q^{1/2},$    we have
$$ \mathcal{M}_j(F)  \le  q^{-d-1} |F| + 2 q^{\frac{-3d-1}{2}} |F|^2.$$
Since this bound is independent of $r\in \mathbb F_q,$   we complete the proof.\\
{\bf Case $2$:}  Assume that  $d$ is even  with $ d=4k-2,$  and   $q\equiv 3 \mod{4}.$    By combining \eqref{MjF} with  the second part of  Lemma \ref{FTFS},   we have
$$\mathcal{M}_j(F) =q^{-d-1} |F| + q^{-2d-1}G_1^d  \sum_{x,y\in F}  K(j, ||x-y||/4).$$
If $j\ne 0,$  then  the Kloosterman sum $ | K(j, ||x-y||/4) | $ is bounded by $2 q^{1/2}.$
Hence,   as in Case 1,  we obtain that
$$ \max_{t\ne 0} \mathcal{M}_j(F) \le q^{-d-1} |F| + 2 q^{\frac{-3d-1}{2}} |F|^2.$$
To complete the proof,  it therefore remains to show that
\begin{equation}\label{RT1}  \mathcal{M}_0(F) \le q^{-d-1} |F| + 2 q^{\frac{-3d-1}{2}} |F|^2.\end{equation}
In fact, we can prove that
\begin{equation}\label{RT}\mathcal{M}_0(F) \le q^{-d-1} |F| +  q^{\frac{-3d-2}{2}} |F|^2,\end{equation}
which is much stronger. Indeed,
$$\mathcal{M}_0(F)=q^{-d-1} |F| + q^{-2d-1}G_1^d  \sum_{x,y\in F}  K(0, ||x-y||/4)$$
$$=q^{-d-1} |F| + q^{-2d-1}G_1^d  \sum_{x,y\in F: ||x-y||=0}  K(0, 0) +   q^{-2d-1}G_1^d  \sum_{x,y\in F: ||x-y|| \ne0}K(0, ||x-y||/4).$$
Using the facts that $K(0,0)=q-1,$ and  $K(0, s)=-1$ for $s\ne 0,$
 \begin{align*} \mathcal{M}_0(F) &= q^{-d-1} |F|+ q^{-2d-1}G_1^d  \sum_{x,y\in F: ||x-y||=0} (q-1) +   q^{-2d-1}G_1^d  \sum_{x,y\in F: ||x-y|| \ne0} (-1)\\
&=q^{-d-1} |F| + q^{-2d} G_1^d \sum_{x, y\in F: ||x-y||=0} 1   - q^{-2d-1} G_1^d  \sum_{x,y\in F} 1. \end{align*}
Since $G_1^d=-q^{d/2}$ by Corollary \ref{Corm},  the second term above  is negative and the third term equals  $q^{\frac{-3d-2}{2}} |F|^2.$
Thus, we obtain that
$$ \mathcal{M}_0(F)\le q^{-d-1} |F|+q^{\frac{-3d-2}{2}} |F|^2,$$
as required.

We remark here that one can apply directly Theorem 1.3 in \cite{IKSPS} for characteristic functions to give a bound which is better than (\ref{RT1}), but weaker than (\ref{RT}). 
\end{proof}

\section{Proof of Theorem \ref{thm0}}\label{prof}
In this section, we devote ourselves to giving a proof of Theorem \ref{thm0}. 
Our idea is to involve a suitable algebraic variety in the  Fourier analysis. An advantage in using an algebraic variety argument is that it offers a new form for the  upper bound of the $L^2$-norm of a certain counting function, which is more manageable for our purpose afterwards.
 %gives simple proofs of the lemmas below whose proofs, otherwise, would be very complicated, and give a new perspective on the lemmas.
%we define an algebraic variety  reflecting the distance function of our concern and explicitly compute its  Fourier transform that will be used to obtain  a lower bound of  $|\Delta(E) + \Delta(F)|$ appearing in Theorem \ref{thm0}.  The algebraic variety argument

\subsection{Algebraic variety and related Fourier transform}
  Let $X=(\mathbf{x},\mathbf{y})$ be the coordinates of  $ \mathbb F_q^{2d} \times \mathbb F_q^{2d}=\mathbb F_q^{4d},$ and let $ ||X||_*$ be the homogeneous polynomial defined by
$$ ||X||_*:= ||\mathbf{x}||-||\mathbf{z}||=x_1^2+\cdots + x_{2d}^2-z_1^2-\cdots -z_{2d}^2.$$

\begin{definition} \label{defV0} Let $V_0$ be the subvariety of $\mathbb F_q^{4d}$ cut out by the equation $ ||X||_*=0$, i.e.,
$$ V_0:=\{\ X\in \mathbb F_q^{4d}: ||X||_*=0\ \}.$$
\end{definition}
We need the following Fourier transform of the  variety $V_0$ in $\mathbb F_q^{4d}.$
\begin{lemma} \label{defVFT} If $M\in \mathbb F_q^{4d},$  then we have
$$ \widehat{V_0}(M):=q^{-4d} \sum_{X\in V_0} \chi(-M\cdot X)=\left\{\begin{array}{ll} q^{-1}\delta_0(M) +  q^{-2d-1} (q-1) \quad &\mbox{if}\quad||M||_*=0,\\
   -  q^{-2d-1}\quad &\mbox{if} \quad||M||_*\ne 0. \end{array} \right.$$                -
\end{lemma}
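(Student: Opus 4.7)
The plan is to compute $\widehat{V_0}(M)$ by the standard Fourier-analytic trick of expressing the indicator of $V_0$ via the orthogonality of $\chi$, namely
\[
1_{V_0}(X)=q^{-1}\sum_{t\in\mathbb F_q}\chi(t\,\|X\|_*),
\]
so that
\[
\widehat{V_0}(M)=q^{-4d-1}\sum_{t\in\mathbb F_q}\sum_{X\in\mathbb F_q^{4d}}\chi(t\,\|X\|_*-M\cdot X).
\]
I would first isolate the contribution $t=0$: the inner sum collapses to $q^{4d}\delta_0(M)$, producing the $q^{-1}\delta_0(M)$ term of the claimed formula.

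Next I would handle $t\neq0$. Writing $M=(M^{\mathbf x},M^{\mathbf z})$ in accordance with the splitting $\|X\|_*=\|\mathbf x\|-\|\mathbf z\|$, the sum over $X$ factors into $4d$ one-variable Gauss sums of the form $\sum_{s\in\mathbb F_q}\chi(\pm t\, s^2-m\, s)$. By the completion-of-squares identity \eqref{ComSqu}, each such sum equals $\eta(\pm t)\,G_1\,\chi\!\left(\tfrac{m^2}{\mp 4t}\right)$. Multiplying the $2d$ factors in the $\mathbf x$-coordinates with the $2d$ factors in the $\mathbf z$-coordinates gives
\[
\eta(t)^{2d}\eta(-t)^{2d}\,G_1^{4d}\,\chi\!\left(\frac{-\|M^{\mathbf x}\|+\|M^{\mathbf z}\|}{4t}\right)=G_1^{4d}\,\chi\!\left(\frac{-\|M\|_*}{4t}\right),
\]
since the quadratic characters square to $1$. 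Using $G_1^2=\eta(-1)q$ and $\eta(-1)^{2d}=1$, I obtain $G_1^{4d}=q^{2d}$, so the $t\neq0$ contribution to $\widehat{V_0}(M)$ simplifies to
\[
q^{-2d-1}\sum_{t\in\mathbb F_q^*}\chi\!\left(\frac{-\|M\|_*}{4t}\right).
\]

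Finally I would split into the two cases stated in the lemma. If $\|M\|_*=0$, the inner character is identically $1$, and the $t$-sum is $q-1$, yielding the $q^{-2d-1}(q-1)$ term. If $\|M\|_*\neq0$, the substitution $u=-\|M\|_*/(4t)$ is a bijection of $\mathbb F_q^*$ to itself, so the $t$-sum equals $\sum_{u\in\mathbb F_q^*}\chi(u)=-1$, giving the $-q^{-2d-1}$ term (and in that case $\delta_0(M)=0$, consistent with the stated formula). There is really no serious obstacle here, as everything reduces to $4d$-fold Gauss sums; the only point that requires care is the bookkeeping of signs and the consolidation $G_1^{4d}=q^{2d}$, which must be carried out without mistakes.
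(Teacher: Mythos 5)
Your proposal is correct and follows essentially the same route as the paper: expand $1_{V_0}$ via orthogonality of $\chi$, isolate the $t=0$ term as $q^{-1}\delta_0(M)$, reduce the $t\neq0$ part to Gauss sums via the completion-of-squares identity \eqref{ComSqu}, use $\eta^{2d}=1$ and $G_1^{4d}=q^{2d}$, and finish with a change of variables and orthogonality over $\mathbb F_q^*$. The sign bookkeeping and the two-case split at the end match the paper's argument, so no issues.
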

\begin{proof}
It follows from the orthogonality of $\chi$ that
$$ \widehat{V_0}(M)=q^{-4d} \sum_{X\in V_0} \chi(-M\cdot X)=q^{-1} \delta_0(M) + q^{-4d-1} \sum_{X\in \mathbb F_q^{4d}} \sum_{s\ne 0} \chi(s||X||_* - M\cdot X).$$
From the formula \eqref{ComSqu},  it follows
$$\widehat{V_0}(M)=q^{-1}\delta_0(M) + q^{-4d-1} {G}_1^{4d} \sum_{s\ne 0} \eta^{2d}(s) \eta^{2d}(-s) \chi\left(\frac{||M||_*}{-4s}\right).$$
Since $\eta^{2d}=1,$ by a change of variables, one has
$$\widehat{V_0}(M)=q^{-1}\delta_0(M) +  q^{-4d-1} {G}_1^{4d} \sum_{r\ne 0}\chi(r||M||_*).$$
By the orthogonality of $\chi$,
$$ \widehat{V_0}(M)=\left\{\begin{array}{ll} q^{-1}\delta_0(M) +{G}_1^{4d}  q^{-4d-1} (q-1) \quad &\mbox{if}\quad||M||_*=0,\\
   -{G}_1^{4d}  q^{-4d-1}\quad &\mbox{if} \quad||M||_*\ne 0. \end{array} \right.$$
Since $G_1^{4d}= q^{2d},$  the proof is complete.
\end{proof}
%The discrete Fourier analysis is very useful in estimating the cardinality  of a specific set satisfying certain conditions.
%For instance, we have the following formula.
%\begin{lemma}\label{AlexF} For a polynomial $P\in \mathbb F_q[x_1,\ldots,x_n],$ define a variety $V$ as
%$$V=\{x\in \mathbb F_q^n: P(x)=0\}.$$
%If  $\Omega$ is a subset of $\mathbb F_q^n,$ then  the number of pairs $(x,y)\in \Omega\times \Omega$ such that $P(x-y)=0$  is given by
%$$ q^{2n}\sum_{m\in \mathbb F_q^{n}} \widehat{V}(m)|\widehat{\Omega}(m)|^2.$$
%\end{lemma}
%\begin{proof}
%Let $T$ be the number of pairs $(x,y)$ with $P(x-y)=0.$ Then we have
%$$ T=\sum_{x,y\in \Omega} V(x-y),$$
%where we recall that we identify the variety $V$ with the indicator function $1_V$ on $V.$ Applying the Fourier inversion theorem to the function $V(x-y)$,  we have
%$$ T=\sum_{x,y\in \mathbb F_q^n} \Omega(x)\Omega(y) \sum_{m\in \mathbb F_q^n} \widehat{V}(m) \chi(m\cdot (x-y)).$$
%The lemma follows immediately from the definition of the Fourier transform $\widehat{\Omega}(m)$ of the indicator function on $\Omega.$
%\end{proof}

By invoking Lemma \ref{defVFT},  we are able to deduce the following lemma.
\begin{lemma}  \label{goodLem}
Let $\mathcal{D} \subset \mathbb F_q^{2d}.$  For each $t\in \mathbb{F}_q$, let $\nu(t)$ be the number of pairs $(\mathbf{x}, \mathbf{y})\in \mathcal{D}\times \mathcal{D}$ such that $||\mathbf{x}-\mathbf{y}||=t$. Then we have
$$  \sum_{t\in \mathbb F_q} \nu(t)^2 \le \frac{|\mathcal{D}|^4}{q}  +  q^{6d}\sum_{||M||_*=0} |\widehat{\mathcal{D}\times \mathcal{D}}(M)|^2.$$
\end{lemma}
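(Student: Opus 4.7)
The plan is to rewrite $\sum_{t \in \mathbb{F}_q} \nu(t)^2$ as a count of quadruples in $\mathcal{D}^4$ and then open up the constraint via Fourier inversion applied to the indicator of $V_0$. Observing that $\sum_{t} \nu(t)^2$ is precisely the number of tuples $(\mathbf{x}_1,\mathbf{y}_1,\mathbf{x}_2,\mathbf{y}_2) \in \mathcal{D}^4$ with $||\mathbf{x}_1 - \mathbf{y}_1|| = ||\mathbf{x}_2 - \mathbf{y}_2||$, I rewrite this quantity as $\sum_{(\mathbf{x}_1,\mathbf{y}_1,\mathbf{x}_2,\mathbf{y}_2)\in\mathcal{D}^4} 1_{V_0}(\mathbf{x}_1 - \mathbf{y}_1,\, \mathbf{x}_2 - \mathbf{y}_2)$ and insert the Fourier inversion formula $1_{V_0}(X) = \sum_{M} \chi(M \cdot X)\, \widehat{V_0}(M)$. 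After interchanging the order of summation, the inner sum over $\mathcal{D}^4$ factors into two identical pieces, each evaluating to $q^{4d}|\widehat{\mathcal{D}}(M_i)|^2$; using the product structure $\widehat{\mathcal{D}\times\mathcal{D}}(M) = \widehat{\mathcal{D}}(M_1)\widehat{\mathcal{D}}(M_2)$, this yields the clean identity
$$\sum_{t \in \mathbb{F}_q} \nu(t)^2 = q^{8d} \sum_{M \in \mathbb{F}_q^{4d}} \widehat{V_0}(M)\, |\widehat{\mathcal{D} \times \mathcal{D}}(M)|^2.$$

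Next, I would substitute the explicit value of $\widehat{V_0}(M)$ from Lemma \ref{defVFT}, which I reorganize as $\widehat{V_0}(M) = -q^{-2d-1} + q^{-1}\delta_0(M) + q^{-2d}\, 1_{\{||M||_*=0\}}(M)$. The three resulting terms are handled separately: the first, after pulling out $-q^{-2d-1}$, contains $\sum_M |\widehat{\mathcal{D}\times\mathcal{D}}(M)|^2$, which is evaluated by Plancherel and produces a \emph{negative} contribution that may be discarded to obtain an upper bound; the second, using $\widehat{\mathcal{D}\times\mathcal{D}}(0) = q^{-4d}|\mathcal{D}|^2$, collapses to $|\mathcal{D}|^4/q$; and the third is, after canceling powers of $q$, exactly $q^{6d}\sum_{||M||_*=0}|\widehat{\mathcal{D}\times\mathcal{D}}(M)|^2$, giving the claimed inequality.

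The whole argument is mostly bookkeeping once the initial reformulation is in place; the only step requiring genuine care is verifying that the leading Plancherel-type term coming from the $-q^{-2d-1}$ piece of $\widehat{V_0}$ has the favorable sign that permits it to be dropped. There is no substantive obstacle beyond cleanly tracking the various $q$-powers and confirming the factorization of the Fourier transform of the product set.
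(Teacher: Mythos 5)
Your proposal is correct and follows essentially the same route as the paper: rewrite $\sum_t \nu(t)^2$ as the number of quadruples with $||X-Y||_*=0$, apply Fourier inversion to the indicator of $V_0$, substitute Lemma \ref{defVFT}, keep the $\delta_0$ term (which gives $|\mathcal{D}|^4/q$) and the $||M||_*=0$ term, and drop the negative Plancherel-type term. The only cosmetic difference is that you make the factorization $\widehat{\mathcal{D}\times\mathcal{D}}(M)=\widehat{\mathcal{D}}(M_1)\widehat{\mathcal{D}}(M_2)$ explicit, which the paper leaves implicit at this stage.
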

\begin{proof}
Since $\nu(t)=\sum_{\mathbf{x}, \mathbf{y}\in \mathcal{D}: ||\mathbf{x}-\mathbf{y}||=t} 1,$ we have
$$ \sum_{t\in \mathbb F_q} \nu(t)^2=\sum_{t\in \mathbb F_q} \left(\sum_{\mathbf{x}, \mathbf{y}\in \mathcal{D}: ||\mathbf{x}-\mathbf{y}||=t} 1   \right)^2 = \sum_{\mathbf{x,y,z,w} \in \mathcal{D}: ||\mathbf{x}-\mathbf{y}||=||\mathbf{z}-\mathbf{w}||} 1.$$
We will relate  the value $\sum_{t} \nu(t)^2$  to  the Fourier transform  on the variety $V_0$  in $\mathbb F_q^{4d}.$
To do this,  we let $X=(\mathbf{x}, \mathbf{z}),  Y=(\mathbf{y}, \mathbf{w}) \in \mathcal{D}\times \mathcal{D}. $  Using these notations with $|| \cdot ||_*$,  we can write
$$ \sum_{t\in \mathbb F_q} \nu(t)^2=  \sum_{X, Y\in \mathcal{D} \times \mathcal{D} : ||X-Y||_*=0}   1= \sum_{X, Y\in \mathcal{D} \times \mathcal{D} }  V_0(X-Y),$$
where  we recall from Definition \ref{defV0} that  the  variety $V_0$  is given by
$$ V_0= \{ X\in \mathbb F_q^{4d}:  ||X||_*=0\}.$$
Applying the Fourier inversion theorem to the characteristic function $V_0(X-Y),$ we get
$$ \sum_{t\in \mathbb F_q} \nu(t)^2=\sum_{X,Y\in \mathcal{D}\times \mathcal{D}} V_0(X-Y)= q^{8d} \sum_{M\in \mathbb F_q^{4d}} \widehat{V_0}(M) |\widehat{\mathcal{D}\times \mathcal{D}}(M)|^2.$$
Replacing $\widehat{V_0}(M)$ by the explicit value given  in  Lemma \ref{defVFT},   we get
$$ \sum_{t\in \mathbb F_q} \nu(t)^2= q^{8d-1} \sum_{||M||_*=0} \delta_0(M) |\widehat{\mathcal{D}\times \mathcal{D}}(M)|^2 + q^{6d}\sum_{||M||_*=0} |\widehat{\mathcal{D}\times \mathcal{D}}(M)|^2 - q^{6d-1} \sum_{M\in \mathbb F_q^{4d}} |\widehat{\mathcal{D}\times \mathcal{D}}(M)|^2. $$
Since the last term of the RHS  is negative and  the first term   is  $|\mathcal{D}|^4/q,$     we  have
$$ \sum_{t\in \mathbb F_q} \nu(t)^2 \le \frac{|\mathcal{D}|^4}{q}  +  q^{6d}\sum_{||M||_*=0} |\widehat{\mathcal{D}\times \mathcal{D}}(M)|^2,$$
as desired.
\end{proof}

We are ready to prove Theorem \ref{thm0}.
\bigskip

\paragraph{\textbf{Proof of Theorem \ref{thm0}}:}
Define $\mathcal{D}=E\times F\subset \mathbb F_q^d \times \mathbb F_q^d$. For $t\in \mathbb{F}_q$, let $\nu(t)$ be the number of pairs $(\mathbf{x}, \mathbf{y})\in \mathcal{D}\times \mathcal{D}$ such that $||\mathbf{x}-\mathbf{y}||=t$.
By the Cauchy-Schwarz inequality, we have
\begin{equation}\label{formudis}  |\Delta(E)+\Delta(F)|=|\Delta(E\times F)|\ge \frac{|E|^4|F|^4}{\sum\limits_{t\in  \mathbb F_q}\nu(t)^2}.\end{equation}
Since $|\mathcal{D}|=|E||F|,$   Lemma \ref{goodLem} implies that
$$\sum_{t\in \mathbb F_q} \nu(t)^2 \le  \frac{|E|^4|F|^4}{q} + q^{6d} \sum_{M'\in \mathbb F_q^{3d}}  |\widehat{E\times F \times E} (M')|^2  \max_{r\in \mathbb F_q} \sum_{m\in S_r^{d-1}} |\widehat{F}(m)|^2. $$
Using Proposition \ref{pro2.4} and the Plancherel theorem, we get
$$\sum_{t\in \mathbb F_q} \nu(t)^2\le \frac{|E|^4|F|^4}{q} +  q^{6d} \left(q^{-3d} |E|^2|F|\right) \left( q^{-d-1} |F| + 2 q^{\frac{-3d-1}{2}} |F|^2\right).$$
Simplifying  the RHS gives us
$$ \sum_{t\in \mathbb F_q} \nu(t)^2 \le   \frac{|E|^4|F|^4}{q} + q^{2d-1}|E|^2|F|^2 + 2 q^{\frac{3d-1}{2}} |E|^2|F|^3.$$
This estimate can be combined with \eqref{formudis} to deduce that
$$ |\Delta(E)+\Delta(F)| >q/2$$
under the conditions that  $|E||F|\ge 2 q^d$ and $|E|^2|F| \ge C q^{(3d+1)/2}$ for a sufficiently large constant $C.$
Now we show that  the condition $|E||F|\ge 2q^d$ is not necessary.
Recall that  we  can assume that  $|E|< 4 q^{(d+1)/2}$, otherwise   $|\Delta(E)|=q,$  which is  the consequence due to Iosevich and Rudnev \cite{io}.
We claim  that  if $|E|^2|F| \ge C q^{(3d+1)/2}$, then $|E||F|\ge 2 q^d.$
If not, then  $|E||F|< 2 q^d$  and  $|E|^2|F| \ge C q^{(3d+1)/2}.$ These two conditions clearly imply that $|E|> \frac{C}{2} q^{(d+1)/2},$ which contradicts  our assumption that $|E| < 4 q^{(d+1)/2}.$

A symmetric argument by switching the roles of $E$ and $F$  also yields that
if $|E||F|^2 \ge C q^{(3d+1)/2},$ then $|\Delta(E) + \Delta(F)|> q/2.$  This completes the proof. $\square$

In the following construction, we show that Theorem \ref{thm0} can not hold when $d\ge 2$ is even and $q\equiv 1\mod 4$.
\begin{construction}\label{sharp-odd} Let $d\ge 2$ is even.
\begin{enumerate}
\item Suppose $q=p^l$ with $p\equiv 1\mod 4$ and $l=3k$. There exist sets $E, F\subset \mathbb{F}_q^d$ such that $|E||F|^2\sim q^{d+\frac{2}{3}}$, and 
$|\Delta(E)+\Delta(F)|\le q/2.$
\item Suppose that $q=p^l$ with $p\equiv 3\mod 4$ and $l=6k$. There exist sets $E, F\subset \mathbb{F}_q^d$ such that $|E||F|^2\sim q^{d+\frac{2}{3}}$, and 
$|\Delta(E)+\Delta(F)|\le q/2.$
\end{enumerate}

\end{construction}
\begin{proof}
We first recall the following result from \cite{mur} due to Murphy and Petridis. 
\begin{itemize}
\item If $q=p^l$ with $p\equiv 1\mod 4$ and $l=3k$, then there exists a set $A\subset \mathbb{F}_q^2$ such that $|A|\sim q^{4/3}$ and $|\Delta(A)|\le q/2$.
\item If $q=p^l$ with $p\equiv 3\mod 4$ and $l=6k$, then there exists a set $A\subset \mathbb{F}_q^2$ such that $|A|\sim q^{4/3}$ and
$|\Delta(A)|\le q/2$.
\end{itemize}
We note that $q=p^l\equiv 3\mod 4$ if and only if $p\equiv 3\mod 4$ and $l$ is odd. 

Since $d\ge 2$ is even and $q\equiv 1\mod 4$, it is known in \cite{hart} that one can find $\frac{d}{2}$ independent vectors in $\mathbb{F}_q^d$, say $\{v_1, \ldots, v_{\frac{d}{2}}\}$, such that $v_i\cdot v_j=0$ for all $1\le i, j\le \frac{d}{2}$, and $\frac{d-2}{2}$ independent vectors in $\mathbb{F}_q^{d-2}$, say $\{v_1', \ldots, v_{\frac{d-2}{2}}'\}$, such that $v_i'\cdot v_j'=0$ for all $1\le i, j\le \frac{d-2}{2}$. Define $E=\mathtt{Span}(v_1, \ldots, v_{\frac{d}{2}})$ and $F=\mathtt{Span}(v_1', \ldots, v_{\frac{d-2}{2}}')\times A$. It is clear that $|E|=q^{d/2}$ and $|F|\sim q^{\frac{d-2}{2}+\frac{4}{3}}$. So, $|E||F|^2\sim q^{\frac{3d}{2}+\frac{2}{3}}$. It follows from definitions of $E$ and $F$ that $\Delta(E)=\{0\}$ and $\Delta(F)=\Delta(A)$. Thus, $|\Delta(E)+\Delta(F)|=|\Delta(A)|\le q/2$.
\end{proof}

\section{Proof of Theorem \ref{primecase}}

In this section,  we give a proof of Theorem \ref{primecase}. To do this, we first recall the following proposition from \cite{pham}, which essentially says that the $L^2$-norm of the distance measure on the Cartesian product set $E\times F$ can be reduced to the $L^2$-norm of the distance measure on each component.

\begin{proposition}\label{proLove} Let $E, F \subset \mathbb F_q^d.$ For any $r\in \mathbb{F}_q$, let $\nu(r)$ be the number of pairs $((e_1, f_1), (e_2, f_2))\in (E\times F)^2$ such that $||e_1-e_2||+||f_1-f_2||=r$, and let $\mu(r)$ be the number of pairs $(x, y)\in E\times E$ such that $||x-y||=r$. Then we have
 \[\sum_{r\in \mathbb F_q}\nu(r)^2\le \frac{|E|^4|F|^4}{q}+q^d|F|^2\sum_{r\in \mathbb F_q}\mu(r)^2.\]
\end{proposition}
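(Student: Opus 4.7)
The plan is a Fourier analysis on the one-dimensional parameter $r\in \mathbb F_q$. Using the orthogonality of the additive character $\chi$ to detect the constraint $||e_1-e_2||+||f_1-f_2||=r$, one writes
\[
\nu(r) \;=\; q^{-1}\sum_{s\in \mathbb F_q}\chi(-sr)\,A(s)\,B(s),
\]
where $A(s):=\sum_{e_1,e_2\in E}\chi(s\|e_1-e_2\|)$ and $B(s):=\sum_{f_1,f_2\in F}\chi(s\|f_1-f_2\|)$. Squaring and using $\sum_r \chi(-(s+s')r)=q\,\mathbf{1}[s'=-s]$ together with $A(-s)=\overline{A(s)}$, $B(-s)=\overline{B(s)}$ gives the Plancherel identity
\[
\sum_{r\in \mathbb F_q}\nu(r)^2 \;=\; q^{-1}\sum_{s\in \mathbb F_q}|A(s)|^2\,|B(s)|^2.
\]
The $s=0$ contribution equals $q^{-1}|A(0)|^2|B(0)|^2 = |E|^4|F|^4/q$, which is exactly the first term in the claimed inequality. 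The task reduces to bounding the sum over $s\ne 0$.

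The main step is the pointwise estimate $|B(s)|^2\le q^d|F|^2$ for every $s\ne 0$. To prove it, I would Fourier-expand both copies of the indicator function $F$ inside $B(s)$, collapse the resulting $v$-sum via orthogonality (forcing $y'=-y$), and obtain
\[
B(s) \;=\; q^d\sum_{y\in \mathbb F_q^d}|\widehat{F}(y)|^2\sum_{w\in \mathbb F_q^d}\chi\bigl(s\|w\|+y\cdot w\bigr).
\]
The inner exponential sum factors coordinate-wise and is evaluated by the completing-the-square formula \eqref{ComSqu}, yielding $\eta(s)^d G_1^d\,\chi(-\|y\|/4s)$ of modulus $q^{d/2}$. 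The triangle inequality and Plancherel ($\sum_y |\widehat{F}(y)|^2 = q^{-d}|F|$) then give $|B(s)|\le q^{d/2}|F|$. This is the only place where the hypothesis $s\ne 0$ is genuinely used (it is needed for the quadratic Gauss sum formula and for the change of variable $-\|y\|/4s$).

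With the pointwise bound in hand, I would pull it out of the sum over $s\ne 0$, then apply the analogous Plancherel identity for $\mu$, namely $\sum_{s\in \mathbb F_q}|A(s)|^2 = q\sum_{r}\mu(r)^2$, to finish:
\[
q^{-1}\sum_{s\ne 0}|A(s)|^2|B(s)|^2
\;\le\; q^{-1}\cdot q^d|F|^2 \sum_{s\in \mathbb F_q}|A(s)|^2
\;=\; q^d|F|^2 \sum_{r\in \mathbb F_q}\mu(r)^2.
\]
Combining with the $s=0$ contribution yields the desired inequality. The only non-routine step is the Gauss-sum-based bound on $|B(s)|^2$; the remaining manipulations are bookkeeping with Plancherel.
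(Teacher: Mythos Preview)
Your argument is correct. The paper does not actually prove this proposition here---it is quoted from \cite{pham}---so there is no in-paper proof to compare against; your Fourier approach (Plancherel on the $r$-variable, then the Gauss-sum evaluation of $\sum_w\chi(s\|w\|+y\cdot w)$ to get $|B(s)|\le q^{d/2}|F|$ for $s\ne 0$) is the standard route and uses exactly the tools set up in Section~2.
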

\paragraph{\textbf{Proof of Theorem \ref{primecase}}:}
If $|E|\ge q^{5/4}$, then it has been proved in \cite[Theorem 1]{mu} that $|\Delta(E)|\gg q$. Therefore, without loss of generality, we can assume that $|E|\le q^{5/4}$.

Let $T(E)$ be the number of triples $(x, y, z)\in E\times E\times E$ such that $||x-y||=||x-z||$ with $||y-z||\ne 0$. Since $q\equiv 3\mod 4$, there are no two distinct points $y, z\in E$ such that $||y-z||=0$.
It has been proved in \cite[Theorem 4]{mu} that there exists a large enough constant $C_2$ such that
\[T(E)\le C_2\left(\frac{|E|^3}{q}+q^{2/3}|E|^{5/3}+q^{1/4}|E|^2\right)\ll q^{2/3}|E|^{5/3},\]
for  $ |E|\le q^{5/4}$.
By the Cauchy-Schwarz inequality, we have
\[\sum_{r\in \mathbb F_q}\mu(r)^2\le |E|\cdot (T(E)+|E|^2) \le C_2\left( \frac{|E|^4}{q}+q^{2/3}|E|^{8/3}+q^{1/4}|E|^3\right)+|E|^3\ll q^{2/3}|E|^{8/3},\]
for $|E|\le q^{5/4}$.
By Proposition \ref{proLove},  it follows that
\[\sum_{r\in \mathbb F_q}\nu(r)^2\le \frac{|E|^4|F|^4}{q}+q^2|F|^2\sum_{r\in \mathbb F_q}\mu(r)^2.\]
Therefore,
\[\sum_{r\in \mathbb F_q}\nu(r)^2\ll \frac{|E|^4|F|^4}{q}+q^\frac{8}{3}|F|^2|E|^{\frac{8}{3}}\ll \frac{|E|^4|F|^4}{q},\]
whenever $|E|^4|F|^6\gg q^{11}$.

As in the proof of Theorem \ref{thm0}, we have
\[|\Delta(E)+\Delta(F)|\ge \frac{|E|^4|F|^4}{\sum_{r}\nu(r)^2}\gg q,\]
under the condition $|E|^4|F|^6\gg q^{11}$.

We also change the roles of $E$ and $F$  in the above proof. Hence, we also see that
if $|E|^6|F|^4 \gg q^{11},$ then $|\Delta(E) + \Delta(F)|\gg q/2.$  This completes the proof. $\square$

\section*{Acknowledgments}
Daewoong Cheong and Doowon Koh were supported by Basic Science Research Programs through National Research Foundation of Korea (NRF) funded by the Ministry of Education (NRF-2018R1D1A3B07045594 and NRF-2018R1D1A1B07044469, respectively). Thang Pham was supported by Swiss National Science Foundation grant P400P2-183916. The authors would like to thank Vietnam Institute for Advanced Study in Mathematics for hospitality during their visit.


\begin{thebibliography}{00}



\bibitem{bkt} J. Bourgain, N. Katz, and T. Tao, \textit{A sum-product estimate in finite fields, and applications}, Geometric \& Functional Analysis, \textbf{14} (2004), no. 2, 27-57.

%\bibitem{CEHIK10} J. Chapman, M. Burak Erdo\u{g}an, D. Hart, A. Iosevich, and D. Koh, {\it Pinned distance sets, k-simplices, Wolff's exponent in finite fields and sum-product estimates}, Math Z. \textbf{271} (2012), no. 1, 63-93.

%\bibitem{HH1}
%N. Hegyv\'{a}ri and  F. Hennecart, \textit{A note on Freiman models in Heisenberg groups}, Israel Journal of Mathematics \textbf{189} (2012), no. 1,  397-411.

\bibitem{hart}
D. Hart, A. Iosevich, D. Koh, and M. Rudnev, \textit{Averages over hyperplanes, sum-product
theory in vector spaces over finite fields and the Erd\H{o}s–Falconer distance conjecture},
Trans. Amer. Math. Soc. \textbf{363}, (2011), no. 6, 3255-3275.

\bibitem{HP}
N. Hegyv\'{a}ri and M. P\'{a}lfy, \textit{Note on a result of Shparlinski and related results}, to appear in Acta Arithmetica, 2019.

%\bibitem{hieu}
%D. D. Hieu and L. A. Vinh, \textit{On distance sets and product sets in vector spaces over finite rings}, Michigan Math. J. \textbf{62} (2013), 779--792.

\bibitem{IK10} A. Iosevich and D. Koh,  \textit{Extension theorems for spheres in the finite field setting},  Forum Math.  \textbf{22} (2010), no. 3, 457-483.

%\bibitem{hai}
%A. Iosevich, D. Koh, and  M. Lewko, \textit{Finite field restriction estimates for the paraboloid in high even dimensions},  J. Funct. Anal. \textbf{278} (2020), no. 11, 108450.
\bibitem{io}
A. Iosevich and M. Rudnev, \textit{Erd\H{o}s distance problem in vector spaces over finite fields}, Trans. Am. Math.
Soc. \textbf{359} (2007), 6127-6142.

%\bibitem{io-koh-ca}
%A. Iosevich,  D. Koh,  S. Lee,  T.  Pham,  and  C.Y. Shen, \textit{On restriction estimates for the zero radius sphere over finite fields},  Canadian Journal of Mathematics, to appear, arXiv:1806.11387.

%\bibitem{iosevich1}
%A. Iosevich, I. Shparlinski, and  M. Xiong, \textit{Sets with integral distances in finite fields}, Trans. Amer. Math. Soc.  \textbf{362} (2010), no. 4,  2189-2204.
\bibitem{IKSPS}
A. Iosevich, D. Koh, S. Lee, T. Pham, and C-Y. Shen, \textit{On restriction estimates for the zero radius sphere over finite fields},  Canadian J. Math, accepted in Canadian Journal of Mathematics, arXiv:1806.11387 (2018).

\bibitem{kohmz}
D. Koh, T. Pham,  C-Y. Shen, and L. A. Vinh,  \textit{A sharp exponent on sum of distance sets over finite fields,} Mathematische Zeitschrift, (2020): 1-17.
\bibitem{kohcone}
D. Koh, S. Lee, T. Pham, \textit{On the cone restriction conjecture in four dimensions and applications in
incidence geometry}, arXiv:2004.06593 (2020).
\bibitem{mot}
D. Koh, T. Pham, and L. A. Vinh, \textit{Extension theorems and a connection to the Erd\H{o}s-Falconer
distance problem over finite fields}, preprint, arXiv:1809.08699 (2018).

\bibitem{kohsun}
D. Koh and H. Sun, \textit{Distance sets of two subsets of vector spaces over finite fields,}  Proc. Amer. Math. Soc. \textbf{143}  (2015), no. 4, 1679-1692.

\bibitem{LN97} R. Lidl and H. Niederreiter, \emph{ Finite fields,} Cambridge University Press, (1997).


%\bibitem{mur}
%%B. Murphy and G. Petridis, \textit{An example related to the Erd\H{o}s-Falconer question over arbitrary finite fields}, preprint, Bull. Hellenic Math. Soc. \textbf{63} (2019), 38-39.
\bibitem{mur}
B. Murphy and G. Petridis, \textit{An example related to the Erd\H{o}s-Falconer question over arbitrary finite fields}, preprint, Bull. Hellenic Math. Soc. \textbf{63} (2019), 38-39.
\bibitem{mu}
B. Murphy,  G. Petridis, T. Pham, M. Rudnev, and S. Stevens, \textit{On the Pinned Distances Problem over Finite Fields}, arXiv:2003.00510, (2020).

%\bibitem{murphy1}
%B. Murphy, G. Petridis, O. Roche-Newton, M. Rudnev, and I. D. Shkredov, \textit{New
%results on sum-product type growth over fields}, Mathematika, \textbf{65} (2019), no. 3,  588-642.


%\bibitem{mac}
%S. Macourt, I. D. Shkredov, and I. Shparlinski, \textit{Multiplicative energy of shifted sugroups
%and bounds on exponential sums with trinomials in finite fields}, accepted in Canadian J. Math., 2018.


\bibitem{pham} T. Pham, \textit{Erd\H{o}s distinct distances problem and extensions over finite spaces}, Phd thesis, EPFL, Lausane Switzerland, 2017.

%\bibitem{PVZ}
%T. Pham, L. A. Vinh, and  F. De Zeeuw, \textit{Three-variable expanding polynomials and higher-dimensional distinct distances}, Combinatorica, \textbf{39} (2019), no. 2,  411-426.

%\bibitem{RUD} M. Rudnev and I. D. Shkredov, \textit{On the restriction problem for discrete paraboloid in lower dimension}, Advances in Mathematics, \textbf{339} (2018),  657-671.

\bibitem{shparlinski}
I. E. Shparlinski, \textit{On the additive energy of the distance set in finite fields}, Finite Fields  Appl. \textbf{42} (2016), 187-199.
\end{thebibliography}
\end{document}